\newtheorem{theorem}{Theorem}
\newtheorem{lemma}[theorem]{Lemma}
\newtheorem{corollary}[theorem]{Corollary}
\newtheorem{conjecture}{Conjecture}
\author
{
Raphael Steiner \thanks{Institute of Theoretical Computer Science, ETH Z\"{u}rich, Switzerland,  \texttt{raphaelmario.steiner@inf.ethz.ch}.
This work was supported by an ETH Postdoctoral Fellowship.}
}
\date{\today}
\title{Asymptotic Equivalence of Hadwiger's Conjecture and its Odd Minor-Variant}
\begin{document}
\maketitle

\begin{abstract}
Hadwiger's conjecture states that every $K_t$-minor free graph is $(t-1)$-colorable. A qualitative strengthening of this conjecture raised by Gerards and Seymour, known as the \emph{Odd Hadwiger's conjecture}, states similarly that every graph with no \emph{odd $K_t$-minor} is $(t-1)$-colorable. For both conjectures, their asymptotic relaxations remain open, i.e., whether an upper bound on the chromatic number of the form $Ct$ for some constant $C>0$ exists. 

We show that if every graph without a $K_t$-minor is $f(t)$-colorable, then every graph without an odd $K_t$-minor is $2f(t)$-colorable. Using this, the recent $O(t\log\log t)$-upper bound of Delcourt and Postle~\cite{del} for the chromatic number of $K_t$-minor free graphs directly carries over to the chromatic number of odd $K_t$-minor-free graphs. This (slightly) improves a previous bound of $O(t(\log \log t)^2)$ for this problem by Delcourt and Postle.  
\end{abstract}

\section{Introduction}
Given a number $t \in \mathbb{N}$, a \emph{$K_t$-expansion} is a graph consisting of vertex-disjoint trees $(T_s)_{s=1}^{t}$ and exactly one connecting edge between any pair of trees $T_s, T_{s'}$ for distinct $s,s' \in \{1,\ldots,t\}$. A graph is said to \emph{contain $K_t$ as a minor} or to \emph{contain a $K_t$-minor} if it admits a subgraph which is a $K_t$-expansion. Hadwiger's conjecture, which may well be seen as one of the most central open problems in graph theory, states the following relation between minor containment and the chromatic number of graphs.
\begin{conjecture}[Hadwiger 1943,~\cite{hadwiger}]
If $G$ is a graph which does not contain $K_t$ as a minor, then $\chi(G) \le t-1$.
\end{conjecture}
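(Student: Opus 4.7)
The statement is Hadwiger's conjecture in its original form, asking for the exact bound $\chi(G)\le t-1$; despite much effort this is known only for $t\le 6$ and is widely regarded as one of the deepest open problems in graph theory. The plan I would follow is the standard minimal-counterexample / contraction-critical reduction, which is how every settled case has been handled.

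First I would argue by induction on $t$. The cases $t\le 4$ are elementary: for $t=4$ one shows that every graph with $\chi\ge 4$ contains a subdivision of $K_4$, via the fact that a minimal $4$-chromatic graph is $3$-connected and an ear-decomposition argument. Given the statement for all $t'<t$, suppose for contradiction that $G$ is a counterexample with $|V(G)|$ minimum: $\chi(G)\ge t$ and $G$ has no $K_t$-minor. Minimality forces $G$ to be \emph{contraction-critical}, i.e.\ every proper minor of $G$ is $(t-1)$-colorable. From this I would derive, in the style of Mader, that $G$ is $(t-1)$-connected, has $\delta(G)\ge t-1$, and satisfies strong local connectivity properties around every edge (no small separator dominates a contracted neighborhood).

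The next step would be to prove a structural dichotomy for such a $G$: either $G$ already contains a $K_t$-expansion explicitly (yielding an immediate contradiction), or $G$ decomposes along a small cut into strictly simpler pieces to which induction applies. This is exactly the pattern of the known cases: for $t=5$ one uses Wagner's theorem that $K_5$-minor-free graphs are $\{0,1,2,3\}$-sums of planar graphs and the Wagner graph, whence $\chi\le 4$ by the Four Color Theorem; for $t=6$, Robertson--Seymour--Thomas show that a contraction-critical counterexample contains a vertex $v$ such that $G-v$ is (almost) planar, again reducing to the Four Color Theorem. I would try to extend this pattern to $t=7$ by exploiting the known excluded-minor characterizations of apex and linklessly embeddable graphs, looking for a similar ``apex-of-apex'' reduction.

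The principal obstacle is exactly here, and I do not see how to get past it with current tools. No structure theorem sharp enough to imply $\chi\le t-1$ is known for any $t\ge 7$: the Robertson--Seymour graph-minor structure theorem is far too coarse, delivering only bounds of the shape $\chi=O(t\sqrt{\log t})$ or, by the very recent result of Delcourt and Postle~\cite{del} cited above, $O(t\log\log t)$. Closing the gap between $O(t\log\log t)$ and $t-1$ would appear to require either a genuinely new structural insight specific to $K_t$-minor-free graphs, or an entirely different (e.g.\ algebraic or probabilistic) route that bypasses structure theorems altogether. Accordingly, the honest content of this ``proposal'' is that the induction plus contraction-criticality framework is the only known systematic line of attack, and that settling the conjecture for even a single new value of $t$ is, as of today, beyond reach; the present paper therefore sensibly treats Hadwiger's conjecture as a hypothesis and reduces the odd-minor variant to it asymptotically rather than attempting a direct proof.
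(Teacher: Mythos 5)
The statement you were handed is a conjecture, not a theorem: it is Hadwiger's conjecture itself, which the paper states as Conjecture~1 and treats throughout purely as a hypothesis, never attempting a proof. Your proposal correctly recognizes this and gives an accurate account of the state of the art --- the result is known only for $t \le 6$, with $t=5$ reducing via Wagner's decomposition to the Four Color Theorem and $t=6$ via Robertson--Seymour--Thomas, and the minimal-counterexample / contraction-criticality framework you sketch is indeed the only systematic approach that has succeeded, yet it stalls at $t=7$ for precisely the reason you name: no structure theorem of the required sharpness is known. Since the paper contains no proof of this statement, there is nothing to compare your attempt against; your honest conclusion that the conjecture is beyond current reach, and that the paper deliberately abstracts it into a hypothetical bound $f(t)$ (Theorem~\ref{thm:main}) rather than trying to establish it, is the correct reading.
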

Hadwiger's conjecture and many variations of it have been studied in past decades, a very good overview of the developments and partial results until about $2$ years ago is given in the survey article~\cite{survey} by Seymour. The best known asymptotic upper bound on the chromatic number of $K_t$-minor free graphs for a long time remained of magnitude $O(t\sqrt{\log t})$, as proved independently by Kostochka~\cite{kostochka} and Thomason~\cite{thomason} in 1984. However, recently there has been progress. First, in 2019, Norine, Postle and Song~\cite{norine} broke the $t \sqrt{\log t}$ barrier by proving an upper bound of the form $O(t (\log t)^\beta)$ for any $\beta>\frac{1}{4}$. Subsequently, there have been several significant improvements of this bound and related results~\cite{norine2},\cite{postle},\cite{postle2}. The following state of the art-bound was proved recently by Delcourt and Postle in~\cite{del}. 
\begin{theorem}\label{thm:del}
The maximum chromatic number of $K_t$-minor free graphs is bounded from above by a function in $O(t\log \log t)$. 
\end{theorem}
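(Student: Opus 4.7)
The plan is to follow the density-plus-reduction-to-small-graphs strategy developed by Kostochka--Thomason and successively refined by Norine--Postle--Song, Postle, and Delcourt--Postle. The starting point is to assume, for contradiction, that $G$ is a $K_t$-minor-free graph with $\chi(G)$ exceeding a large constant multiple of $t\log\log t$, and to pass to a vertex-critical subgraph $H$, so that $\delta(H) \ge \chi(H)-1$ and $H$ satisfies the usual criticality constraints on small vertex separators.

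The baseline is the Kostochka--Thomason theorem: any graph of average degree at least $ct\sqrt{\log t}$ contains a $K_t$-minor. Applied to $H$, this gives $\chi(H)=O(t\sqrt{\log t})$ -- the bound to beat. To improve the $\sqrt{\log t}$ factor to $\log\log t$, I would establish (or invoke) a chromatic reduction of the following flavor: if every $K_t$-minor-free graph on at most $N=N(t)$ vertices is $g(t)$-colorable, then every $K_t$-minor-free graph is $(g(t)+O(t))$-colorable. The natural tool here is the Alon--Seymour--Thomas sublinear separator theorem, which gives balanced separators of size $O(t\sqrt{n})$ in any $K_t$-minor-free graph on $n$ vertices; by iterating this separator decomposition one reduces the original graph to pieces of order at most $N(t)$, and then stitches their colorings together across the separators.

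Next I would prove a quantitatively sharper density-to-minor statement, valid in the ``small'' regime (say $n\le t^{\mathrm{polylog}(t)}$): in such graphs, after a standard density-boosting argument, one can construct a $K_t$-expansion once the average degree exceeds only $ct\log\log t$, rather than the general $ct\sqrt{\log t}$. The intuition is that in graphs whose order is essentially polynomial in $t$, the trees of the $K_t$-expansion can be kept so small that the ``wasteful'' logarithmic slack in Kostochka--Thomason can be recovered. This yields $g(t)=O(t\log\log t)$ for small $K_t$-minor-free graphs, which combined with the reduction above gives the theorem.

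The main obstacle is the reduction step: controlling the recursive separator decomposition so that it costs only $O(t)$ additive colors per level rather than a constant multiplicative factor, since it is precisely a multiplicative loss at each of $\Theta(\log\log t)$ levels that produced the earlier $O(t(\log\log t)^2)$ bound. Shaving off a full factor of $\log\log t$ should require an amortized argument in which most separators are charged against the density already available in $H$, rather than against new colors introduced at each level.
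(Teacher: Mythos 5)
This theorem is not proved in the paper: it is Delcourt and Postle's result, imported as Theorem~\ref{thm:del} from~\cite{del}, and the paper's own contribution (Theorem~\ref{thm:main} via Lemma~\ref{lemma}) is a separate, elementary reduction from the odd-minor setting to the minor setting. There is therefore no internal proof to compare your sketch against.

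Taken on its own terms, your sketch has two substantive problems. First, as you yourself flag, the recursive separator decomposition would have to cost only $O(t)$ additive colors \emph{in total} across $\Theta(\log\log t)$ levels, and you do not supply the amortization that would achieve this; a multiplicative loss per level is exactly what produced the earlier $O(t(\log\log t)^2)$ bound, so this gap is the whole ballgame. Second, the assertion that in graphs of order at most $t^{\mathrm{polylog}(t)}$ one can force a $K_t$-minor at average degree $ct\log\log t$ is false. The extremal graphs witnessing the Kostochka--Thomason lower bound are dense random graphs of order $\Theta(t\sqrt{\log t})$, which lie squarely inside your ``small'' regime, so the density-to-minor threshold remains $\Theta(t\sqrt{\log t})$ there. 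What can be improved for small $K_t$-minor-free graphs is their \emph{chromatic number}, which is a genuinely different claim requiring a coloring argument rather than a density argument, and the Delcourt--Postle proof of~\cite{del} is organized around a more refined reduction than the recursive Alon--Seymour--Thomas separator scheme you describe.
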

A strengthening of Hadwiger's conjecture to so-called \emph{odd minors} was conjectured by Gerards and Seymour in~\cite{gerards}. A $K_t$-expansion $H$ certified by a corresponding collection of vertex-disjoint trees $(T_s)_{s=1}^{t}$ is said to be \emph{odd}, if there exists an assignment of two colors $\{1, 2\}$ to the vertices of $H$ in such a way that every edge contained in one of the trees $T_s$ with $s \in \{1,\ldots,t\}$ is bichromatic (i.e., has different colors at its endpoints), while every edge joining two distinct trees is monochromatic (i.e., has the same color at its endpoints).

Finally, we say that a graph contains \emph{$K_t$ as an odd minor} or that it contains an \emph{odd $K_t$-minor} if it contains a subgraph which is an odd $K_t$-expansion. 
\begin{conjecture}[Gerards and Seymour~\cite{gerards}]\label{oddconjecture}
If $G$ is a graph which does not contain $K_t$ as an odd minor, then $\chi(G) \le t-1$.
\end{conjecture}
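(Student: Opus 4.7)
My plan is to attempt a proof by induction on $t$. The base case $t=4$ is Catlin's theorem (every graph with no odd $K_4$-minor is $3$-colorable), and the cases $t\le 3$ are immediate. For the inductive step, assume the conjecture holds below $t$ and let $G$ be a vertex-minimum counterexample: $\chi(G)=t$, $G$ contains no odd $K_t$-minor, and every proper subgraph of $G$ is $(t-1)$-colorable. Standard criticality then gives $\delta(G)\ge t-1$ and $G$ is $(t-1)$-connected. Since $\chi(G)\ge 3$, $G$ is not bipartite, so I fix a shortest odd cycle $C$ together with a breadth-first-search tree $T$ rooted at an arbitrary vertex.

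The construction aims to exhibit an odd $K_t$-expansion inside $G$, contradicting the hypothesis. Using $T$, I equip $V(G)$ with the distance-parity coloring $\pi\colon V(G)\to\{0,1\}$, which turns the odd-expansion condition into the combinatorial requirement of finding $t$ vertex-disjoint $T$-subtrees $T_1,\ldots,T_t$ (whose internal edges are automatically $\pi$-bichromatic) that are pairwise joined by at least one $\pi$-monochromatic edge. The density $\delta(G)\ge t-1$, together with Theorem~\ref{thm:del} applied to the auxiliary graph obtained by contracting $C$ to a single vertex, should furnish enough minor-structure to locate $t-1$ subtrees of $T$ forming an ordinary $K_{t-1}$-expansion attached to $C$; taking $C$ itself as the $t$-th branch set and re-routing offending inter-tree edges through $C$ to flip their parities would complete the expansion.

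The crux — and the reason the conjecture has resisted proof — lies in the global parity coordination in the last step. An odd $K_t$-expansion imposes $\binom{t}{2}$ simultaneous parity constraints on the inter-tree connections, and a local re-routing through $C$ that repairs one constraint can easily spoil another. One would have to show that these constraints are jointly solvable, for instance by exhibiting enough ``parity slack'' inside the $K_{t-1}$-expansion produced in the previous step; this seems to demand a signed-graph refinement of the Kostochka--Thomason extremal machinery that is not currently available. This is precisely the obstruction at which the reduction implicit in the excerpt's abstract, as well as the earlier bound of Delcourt and Postle, concedes a multiplicative factor of $2$: eliminating that factor to reach the exact value $t-1$ appears to require a genuinely new structural insight beyond the techniques visible in the excerpt.
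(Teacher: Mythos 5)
The statement you were asked to prove is Conjecture~\ref{oddconjecture}, the Gerards--Seymour Odd Hadwiger's conjecture. This is an \emph{open problem}: the paper does not prove it, and there is no proof of it to compare against. What you have written is, by your own admission in the final paragraph, not a proof either; you identify the ``global parity coordination'' step as the point where the argument stops, so the sketch has no conclusion.

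Beyond that honest concession, several of the intermediate steps do not hold up. The appeal to Theorem~\ref{thm:del} cannot produce a $K_{t-1}$-expansion inside a minimum counterexample: a chromatic bound of $O(t\log\log t)$ for $K_t$-minor-free graphs is vacuous for locating clique minors in a specific graph $G$ with $\chi(G)=t$, since $G$ could have no $K_s$-minor for any $s$ close to $t$. Likewise $\delta(G)\ge t-1$ alone only forces a $K_s$-minor for $s$ on the order of $t/\sqrt{\log t}$ (Kostochka, Thomason), not $s=t-1$; indeed if minimum degree $t-1$ forced a $K_{t-1}$-minor, Hadwiger's conjecture itself would be easy. The ``re-routing through $C$'' device is also not well-defined as stated: the branch sets of a $K_t$-expansion must be vertex-disjoint, so a connector between $T_s$ and $T_{s'}$ that passes through $C$ (your $t$-th branch set) is no longer an edge between $T_s$ and $T_{s'}$; and even granting some fix, a single shortest odd cycle offers far too little freedom to satisfy $\binom{t}{2}$ independent parity constraints. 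Also, requiring the trees $T_1,\dots,T_t$ to be subtrees of a single BFS tree $T$ is an unnecessary and costly restriction; general odd expansions allow arbitrary trees in $G$, and the parity condition is a $2$-coloring of the expansion, not necessarily the BFS distance parity.

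For calibration against what the paper actually establishes: Theorem~\ref{thm:main} is a \emph{reduction}, not a proof of the conjecture. It decomposes $V(G)$ into connected bipartite parts (Lemma~\ref{lemma}), forms the quotient graph $H$ on these parts, and shows $H$ is $K_t$-minor-free whenever $G$ is odd-$K_t$-minor-free; properly coloring $H$ with $f(t)$ colors and refining by the bipartitions gives $2f(t)$ colors for $G$. This inherently loses a multiplicative factor of $2$ and cannot recover the exact bound $t-1$. Proving Conjecture~\ref{oddconjecture} itself would require ideas beyond both your sketch and the paper.
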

Just as for Hadwiger's conjecture, the asymptotic growth of the best-possible upper bound on the chromatic number of graphs without an odd $K_t$-minor has been studied. First, Geelen, Gerards, Reed, Seymour and Vetta proved in~\cite{geelen} that every graph with no odd $K_t$-minor is $O(t \sqrt{\log t})$-colorable. A shorter proof for the same result was given by Kawarabayashi in~\cite{kawarabayashi}.  Subsequently, an asymptotical improvement of this upper bound to $O(t(\log \log t)^\beta)$ for any $\beta>\frac{1}{4}$ was achieved by Norine and Song in~\cite{norine3}. This was improved further to $O(t (\log \log t)^6)$ by Postle in~\cite{postle3}. Very recently the exponent of the $\log \log t$ factor was further improved by Delcourt and Postle in~\cite{del}, resulting in an $O(t(\log \log t)^2)$-bound. 
Many further results on odd $K_t$-minor free graphs are known, we refer to~\cite{kang,kawarabayashi2,kawarabayashi3,kawarabayashireed,kawarabayashireedwollan,kawarabayashisong} for some additional references.

The purpose of this note is to show that asymptotically, the maximum chromatic number of $K_t$-minor free graphs and the maximum chromatic number of odd $K_t$-minor free graphs differ at most by a multiplicative factor of $2$.
 
\begin{theorem}\label{thm:main}
Let $t \in \mathbb{N}$ and let $f(t)$ be an integer such that every graph not containing $K_t$ as a minor is $f(t)$-colorable. Then every graph not containing $K_t$ as an odd minor is $2f(t)$-colorable. 
\end{theorem}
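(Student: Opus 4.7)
The plan is to find a 2-partition $V(G) = A \sqcup B$ such that both induced subgraphs $G[A]$ and $G[B]$ are $K_t$-minor-free. Granted this, the hypothesis on $f$ yields $\chi(G) \le \chi(G[A]) + \chi(G[B]) \le f(t) + f(t) = 2f(t)$, and the theorem follows.

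The natural candidate for such a partition is the BFS-parity bipartition. Fix any vertex $r \in V(G)$, run a BFS, and let $A$ (resp.\ $B$) collect the vertices at even (resp.\ odd) distance from $r$. Because any two adjacent vertices differ by exactly one in BFS-level, every edge of $G[A]$ (and of $G[B]$) is confined to a single level $L_k$. Consequently $G[A]$ and $G[B]$ are vertex-disjoint unions of the subgraphs $G[L_k]$, so it suffices to prove the following key lemma: \emph{if $G$ has no odd $K_t$-minor, then $G[L_k]$ is $K_t$-minor-free for every $k$.}

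I would prove the key lemma by contrapositive. Suppose $G[L_k]$ contains a $K_t$-minor witnessed by branch trees $T_1, \dots, T_t \subseteq G[L_k]$ with connecting edges inside $L_k$ (note $k \ge 1$ since $L_0 = \{r\}$). I would then construct an odd $K_t$-minor in $G$ by extending each $V(T_s)$ downward through the BFS tree: pick $v_s \in V(T_s)$ for each $s$, let $P_s$ be the BFS path from $v_s$ to $r$, and partition the subtree $\bigcup_s P_s$ of the BFS tree (which has leaves exactly $\{v_1,\dots,v_t\}$) into $t$ vertex-disjoint connected pieces $Q_s \ni v_s$. The new branch sets $B_s := V(T_s) \cup Q_s$ are then vertex-disjoint and connected, and the original inter-tree edges inside $L_k$ remain available as connecting edges; in addition, any ``shortcut'' edges of $G$ running between distinct $Q_s$'s at lower BFS levels become available as extra connecting edges.

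The main obstacle will be to verify the odd condition, i.e., to exhibit a 2-coloring making every tree-edge bichromatic and every chosen connecting-edge monochromatic. This reduces to solving a linear system over $\mathbb{F}_2$ of the form $\epsilon_s + \epsilon_{s'} \equiv \delta_{ss'} \pmod{2}$ on toggle bits $\epsilon_s$, whose constants $\delta_{ss'}$ encode the chosen connecting edge together with the bipartition of each branch tree's spanning tree. The hard part of the proof will be to exploit the flexibility afforded by the BFS extension --- the choice of $v_s$, the partition into pieces $Q_s$, and especially the freedom to reroute connecting edges through shortcuts at lower BFS levels --- to force this parity system to be balanced and hence solvable. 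Once that is accomplished, the resulting expansion is an odd $K_t$-minor in $G$, contradicting the hypothesis and completing the proof.
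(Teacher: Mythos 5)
Your approach is genuinely different from the paper's, but it has a real gap exactly where you flag ``the hard part,'' and I do not see how to close it along the lines you propose.

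The paper's proof does not use BFS layers. It greedily partitions $V(G)$ into sets $X_1,X_2,\ldots$ by repeatedly taking an \emph{inclusion-maximal} subset of the remaining vertices inducing a connected bipartite subgraph. Maximality is what does all the work: it forces that whenever $X_j$ (with $j>i$) sends an edge to $X_i$, there must be a single vertex $v\in X_j$ with two neighbours $u_1,u_2\in X_i$ lying on \emph{opposite} sides of the bipartition of $G[X_i]$ (otherwise $v$ could have been absorbed into $X_i$). This ``two choices of opposite parity'' property is precisely what makes the odd-minor lifting go through with no linear algebra at all: when you pull back a $K_t$-expansion in the quotient graph $H$ to branch trees in $G$, for each pair of branch trees you simply pick whichever of $u_1v, u_2v$ is monochromatic under the coloring you already have. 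There is no parity system to balance, because every constraint is individually satisfiable for free.

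Your BFS construction does not come with any analogous guarantee. After extending each $T_s\subseteq L_k$ downward by pieces $Q_s$ of the BFS tree and choosing spanning trees of the branch sets, you obtain one parity value $\delta_{ss'}\in\mathbb{F}_2$ per chosen connecting edge, and solvability of $\epsilon_s+\epsilon_{s'}=\delta_{ss'}$ for all $\binom{t}{2}$ pairs requires the $\delta$-labelling to be a cut of $K_t$ --- a codimension-$\binom{t-1}{2}$ condition. You correctly identify this as the crux, but the sources of flexibility you list (choice of $v_s$, of the partition into $Q_s$'s, of spanning trees, and of shortcut edges at lower levels) are not shown to generate enough freedom to hit a cut, and I see no reason they should in general: for a single BFS layer $L_k$ there may be only one edge of $G$ between a given pair of branch sets, in which case $\delta_{ss'}$ is simply forced. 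In fact, I would not even take for granted that the key lemma (``no odd $K_t$-minor in $G$ implies each $G[L_k]$ is $K_t$-minor-free'') is true; as stated it is an independent claim that your sketch does not establish. Compare this with the paper's Lemma~\ref{lemma}, which is engineered so that each connecting constraint can be satisfied \emph{locally}, sidestepping the global parity problem entirely.

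One smaller point: adjacent vertices in a BFS differ in level by \emph{at most} one, not exactly one; your conclusion that $G[A]$ and $G[B]$ decompose into the per-level graphs $G[L_k]$ is still fine, since edges between two distinct even (or two distinct odd) levels would have to jump by at least $2$.

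Incidentally, your outer framework --- partition $V(G)$ into blobs, bound $\chi(G)$ by a small multiple of the chromatic number of a quotient, and show the quotient (or the pieces) is $K_t$-minor-free by lifting a putative $K_t$-minor to an odd $K_t$-minor in $G$ --- is the same high-level strategy as the paper's. The decisive difference is \emph{which} partition you use, and the paper's maximal-bipartite-pieces partition is chosen specifically to kill the parity obstruction you run into. If you want to salvage the BFS route, you would need to prove the key lemma outright, which seems to require a new idea beyond ``extend downward and solve the system.''
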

Theorem~\ref{thm:main} has a very simple proof, given in Section~\ref{sec:proof} below. It is useful in the sense that any progress made towards a better asymptotic upper bound on the chromatic number of $K_t$-minor free graphs carries over, without further work and only at the prize of a constant multiplicative factor, to odd $K_t$-minor free graphs. In particular, Theorem~\ref{thm:del} together with Theorem~\ref{thm:main} directly yields the following (slight) asymptotical improvement of the $O(t(\log \log t)^2)$-upper bound on the chromatic number of odd $K_t$-minor free graphs by Delcourt and Postle. 
\begin{corollary}
The maximum chromatic number of odd $K_t$-minor free graphs is bounded from above by a function in $O(t\log \log t)$. 
\end{corollary}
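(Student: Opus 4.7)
The plan is to deduce the corollary directly by composing Theorem~\ref{thm:del} with Theorem~\ref{thm:main}; beyond checking that the integer-valued hypothesis of Theorem~\ref{thm:main} is met, there is no additional graph-theoretic content to produce.

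First, I would invoke Theorem~\ref{thm:del} to fix a constant $C>0$ such that every $K_t$-minor-free graph admits a proper coloring with at most $C t \log \log t$ colors for all sufficiently large $t$ (and trivially with at most a constant number of colors for small $t$). I would then define $f(t) := \lceil C t \log \log t \rceil$ (adjusted by an additive constant on the small-$t$ range), which is integer-valued and therefore meets the hypothesis of Theorem~\ref{thm:main}.

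Second, I would apply Theorem~\ref{thm:main} with this choice of $f$ to conclude that every graph not containing $K_t$ as an odd minor is $2f(t)$-colorable. Since $2f(t) = 2\lceil C t \log \log t \rceil \in O(t \log \log t)$, the desired upper bound on the chromatic number of odd $K_t$-minor-free graphs follows immediately.

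The main obstacle, as it were, sits inside Theorem~\ref{thm:main} rather than in this deduction---the factor-of-two bridge between the two coloring problems is exactly the content of that theorem, and doing better would require genuinely new ideas. The only minor snag in the composition above is that Theorem~\ref{thm:main} insists on an integer-valued $f$, but rounding an $O(t \log \log t)$ bound up to the next integer affects neither the asymptotics nor the validity of the coloring guarantee, so this causes no difficulty.
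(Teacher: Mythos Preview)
Your proposal is correct and matches the paper's approach exactly: the corollary is stated as an immediate consequence of combining Theorem~\ref{thm:del} with Theorem~\ref{thm:main}, and the paper offers no further argument beyond that. Your attention to the integer-valued requirement on $f$ is appropriate, though the paper does not dwell on it.
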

\section{Proof of Theorem~\ref{thm:main}}\label{sec:proof}
The proof is based on the following lemma.

\begin{lemma}\label{lemma}
Let $G$ be a graph. Then there exists $n \in \mathbb{N}$ and a partition of $V(G)$ into $n$ non-empty sets $X_1,\ldots,X_n$ such that the following hold: 
\begin{itemize}
\item for every $1 \le i \le n$, the graph $G[X_i]$ is bipartite and connected,
\item for every $1 \le i<j \le n$, either there are no edges in $G$ between $X_i$ and $X_j$, or there exist $u_1, u_2 \in X_i$ and $v \in X_j$ such that $u_1v, u_2v \in E(G)$ and $u_1$ and $u_2$ lie on different sides of the bipartition of $G[X_i]$.
\end{itemize}
\end{lemma}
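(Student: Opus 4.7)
The plan is to prove the lemma by induction on $|V(G)|$. The idea is to greedily choose the first part $X_1$ of the partition to be a \emph{maximum} (in cardinality) connected bipartite induced subgraph of $G$, and then apply the inductive hypothesis to $G - X_1$ to obtain the remaining parts $X_2, \ldots, X_n$. The extremal choice of $X_1$ automatically forces the second bullet whenever $X_1$ is involved.

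For the base case $|V(G)| = 1$, take $n = 1$ and $X_1 = V(G)$. For the inductive step, pick $X_1 \subseteq V(G)$ such that $G[X_1]$ is connected and bipartite and $|X_1|$ is maximum; this is well defined since every singleton qualifies. Apply the induction hypothesis to $G - X_1$ to obtain a partition $X_2, \ldots, X_n$ of $V(G) \setminus X_1$ (if $X_1 = V(G)$, stop with $n = 1$). The first bullet is immediate: it holds for $X_1$ by construction, and for each $X_i$ with $i \ge 2$ because $G[X_i] = (G - X_1)[X_i]$, so the inductive conclusion transfers.

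For the second bullet, the pairs $(X_i, X_j)$ with $2 \le i < j \le n$ are inherited from the inductive hypothesis applied to $G - X_1$, since both the bipartition of each such $G[X_i]$ and the set of edges between such parts are unchanged by removing $X_1$. So it remains to handle pairs $(X_1, X_j)$ with $j \ge 2$. Fix such a $j$ and assume there is at least one edge between $X_1$ and $X_j$. Suppose toward contradiction that no vertex of $X_j$ has two neighbors in $X_1$ on different sides of the bipartition $(A_1, B_1)$ of $G[X_1]$. Then, picking any $v \in X_j$ with $N(v) \cap X_1 \ne \emptyset$ (which exists by the edge assumption), all neighbors of $v$ in $X_1$ lie on a single side of the bipartition, say $A_1$. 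But then $G[X_1 \cup \{v\}]$ is connected (via any edge from $v$ into $X_1$) and bipartite with bipartition $(A_1, B_1 \cup \{v\})$, contradicting the maximality of $|X_1|$.

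I do not foresee a real obstacle: the argument is essentially a one-line extremal observation (the maximality of $X_1$ forces any external vertex seeing $X_1$ to touch both sides of its bipartition) combined with straightforward induction on the number of vertices, and no delicate case analysis is required.
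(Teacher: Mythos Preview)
Your proof is correct and follows essentially the same approach as the paper: greedily peel off an extremal connected bipartite induced subgraph (you take one of \emph{maximum cardinality}, the paper takes an \emph{inclusion-wise maximal} one), then handle the rest recursively/iteratively, with the extremality of $X_i$ forcing any adjacent vertex in a later part $X_j$ to see both sides of the bipartition of $G[X_i]$. The only differences are cosmetic---induction on $|V(G)|$ versus an explicit iterative construction, and maximum versus inclusion-wise maximal---and neither affects the argument.
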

\begin{proof}
Let us define the partition $X_1,X_2,\ldots$ of $V(G)$ inductively as follows:

Suppose that for some integer $i \ge 1$, all the sets $X_k$ with $1 \le k<i$ have been defined already, and do not yet form a partition, i.e., $\bigcup_{1 \le k<i}{X_k} \neq V(G)$. We now choose $X_i$ as an inclusion-wise maximal set among all subsets $X \subseteq V(G) \setminus \bigcup_{1 \le k<i}{X_k}$ which satisfy that $G[X]$ is a bipartite and connected graph. Note that $X_i \neq \emptyset$, since for every vertex $x \in V(G) \setminus \bigcup_{1 \le k<i}{X_k}$, the graph $G[\{x\}]$ is bipartite and connected. 

Since we are adding a non-empty set to our collection of pairwise disjoint subsets of $V(G)$ at each step, the above procedure eventually yields a partition $X_1,\ldots,X_n$ of $V(G)$ for some $n \in \mathbb{N}$. By definition, we have that $G[X_i]$ is bipartite and connected for $i=1,\ldots,n$, and hence what remains to show is the second property of the partition stated in the lemma. 

So let $i,j$ be given such that $1 \le i <j \le n$, and suppose that there exists at least one edge $e \in E(G)$ between $X_i$ and $X_j$. Denote $e=uv$ with $u \in X_i$ and $v \in X_j$. Let $\{A; B\}$ be the unique bipartition of $G[X_i]$. We claim that $v$ must have a neighbor $u_1 \in A$ and a neighbor $u_2 \in B$, which then yields the statement claimed in the lemma. Indeed, suppose not, and suppose w.l.o.g. that $v$ is not adjacent to any vertex in $A$ (the case that $v$ has no neighbor in $B$ is of course symmetric). Then also the graph $G[X_i \cup \{v\}]$ is bipartite and connected: It is connected since $G[X_i]$ is connected and because of the edge $uv$, and it is bipartite since $\{A \cup \{v\}; B\}$ forms its unique bipartition. However, putting $X:=X_i \cup \{v\} \subseteq V(G) \setminus \bigcup_{1 \le k<i}{X_k}$, this contradicts the definition of $X_i$ as an inclusion-wise maximal subset of $V(G) \setminus \bigcup_{1 \le k<i}{X_k}$ inducing a bipartite and connected subgraph.
\end{proof}

We can now easily deduce Theorem~\ref{thm:main}. 

\begin{proof}[Proof of Theorem~\ref{thm:main}]
Let $t \in \mathbb{N}$ and suppose that $f(t)$ is an integer such that every $K_t$-minor free graph is $f(t)$-colorable. Let $G$ be any given graph without an odd $K_t$-minor, and let us prove that $\chi(G) \le 2f(t)$. 

We apply Lemma~\ref{lemma} to $G$ and obtain a partition $X_1,\ldots,X_n$ of $V(G)$ with properties as stated in the lemma. Let $H$ be defined as the graph with vertex-set $\{1,\ldots,n\}$ and which has an edge between distinct vertices $i$ and $j$ if and only if there exists at least one edge in $G$ between $X_i$ and $X_j$. It follows from the statement of the lemma that for every edge $ij \in E(H)$ with $i<j$, there exist $u_1, u_2 \in X_i$ and $v \in X_j$ such that $u_1v, u_2v \in E(G)$ and $u_1, u_2$ are on different sides of the bipartition of $G[X_i]$. 

We claim that $\chi(G) \le 2\chi(H)$. To see this, let $c_H:\{1,\ldots,n\} \rightarrow \{1,\ldots,\chi(H)\}$ be a proper coloring of $H$, and for every $i \in \{1,\ldots,n\}$ let $c_i:X_i \rightarrow \{1,2\}$ be a proper coloring of the bipartite graph $G[X_i]$. It now follows directly from the definition of $H$ that the coloring $c_G$ of $G$ with color set $\{1,\ldots,\chi(H)\} \times \{1,2\}$, defined by $c_G(x):=(c_H(i),c_i(x))$ for every $x \in X_i$ and $i \in \{1,\ldots,n\}$, is a proper coloring of $G$. Therefore, $\chi(G) \le 2\chi(H)$.

Next, we will show that $\chi(H) \le f(t)$ by proving that $H$ does not contain $K_t$ as a minor. Suppose towards a contradiction that $H$ contains a subgraph which is a $K_t$-expansion, i.e., there exist vertex-disjoint trees $(T_s)_{s=1}^{t}$ contained in $H$ and for every pair $\{s,s'\} \subseteq \{1,\ldots,t\}$ an edge $e(s,s') \in E(H)$ with endpoints in $T_s$ and $T_{s'}$. 

For every fixed $s \in \{1,\ldots,t\}$, let us consider the subgraph $G_s:=G\left[\bigcup_{i \in V(T_s)}{X_i}\right]$ of $G$. This is a connected graph because $G[X_i]$ is connected for every $i \in V(T_s)$, since $T_s$ is connected, and since by definition of $H$ for every edge $ij \in E(T_s)$ there exists at least one connecting edge between $X_i$ and $X_j$ in $G$. In particular, $G_s$ contains a spanning tree $T^G_s$ which has the property that $T^G_s[X_i]$ forms a spanning tree of $G[X_i]$, for every $i \in V(T_s)$.

The trees $(T^G_s)_{s=1}^{t}$ in $G$ defined as above are pairwise vertex-disjoint. Let us denote by $c:\bigcup_{s=1}^{t}{V(T^G_s)} \rightarrow \{1,2\}$ a $2$-color-assignment obtained by piecing together proper $2$-colorings of the individual trees $(T^G_s)_{s=1}^{t}$. 

\paragraph{Claim.} For every pair $\{s,s'\} \subseteq \{1,\ldots,t\}$, there exists an edge $f(s,s') \in E(G)$ with endpoints in $T^G_{s}$ and $T^G_{s'}$, such that $f(s,s')$ is monochromatic with respect to the coloring $c$.
\begin{proof}[Subproof]
By assumption, there exists $e(s,s') \in E(H)$ which connects a vertex in $i \in V(T_s)$ to a vertex $j \in V(T_{s'})$. Possibly after relabelling assume w.l.o.g. $1 \le i <j \le n$. Then the second property of the partition $X_1,\ldots,X_n$ guaranteed by Lemma~\ref{lemma} yields the existence of vertices $u_1, u_2 \in X_i \subseteq V(T^G_{s})$ and $v \in X_j\subseteq V(T^G_{s'})$ such that $f_1:=u_1v, f_2:=u_2v \in E(G)$, and such that $u_1$ and $u_2$ lie on different sides of the unique bipartition of $G[X_i]$. Since $u_1, u_2 \in X_i \subseteq V(T^G_{s})$ and since by our choice of $T^G_s$ the graph $T^G_s[X_i]$ forms a spanning tree of $G[X_i]$, it follows that $u_1$ and $u_2$ also must be on different sides in the unique bipartition of $T^G_s[X_i]$. In particular, $c(u_1) \neq c(u_2)$, which implies that $c(u_r)=c(v)$ for some $r \in \{1,2\}$. Now the edge $f_r \in E(G)$ connects the vertex $u_r$ in $T^G_s$ with the vertex $v$ in $T^G_{s'}$, and is monochromatic with respect to $c$. This proves the subclaim with $f(s,s'):=f_r$.
\end{proof}
It follows directly from the previous claim that the union of the vertex-disjoint trees $(T^G_s)_{s=1}^{t}$, joined by the edges $f(s,s')$ for every pair $\{s,s'\} \subseteq \{1,\ldots,t\}$, forms an odd $K_t$-expansion contained in $G$. This contradicts our initial assumption that $G$ does not contain $K_t$ as an odd minor. Hence, our initial assumption was wrong, and we have established that $H$ is $K_t$-minor free. It now follows that $\chi(G) \le 2\chi(H)\le 2f(t)$, as required. This concludes the proof of the theorem. 
\end{proof}

\end{document}